\theoremstyle{plain}
\numberwithin{equation}{section}
\newcommand{\norm}[1]{\left\lVert #1 \right\rVert}
\newcommand{\dx}{\;\mbox{d}x}
\newcommand{\dt}{\;\mbox{d}t}
\newcommand{\ds}{\;\mbox{d}s}
\newcommand{\osc}[1]{\underset{#1}{\mathrm{osc}}\,}
\theoremstyle{definition}
\newtheorem{Teo}{Theorem}[section]
\newtheorem{Lemma}[Teo]{Lemma}
\newtheorem{Prop}[Teo]{Proposition}
\newtheorem{theorem}{Theorem}[section]
\DeclareRobustCommand{\gobblefour}[4]{}
\begin{document}

\title[$p$-Orthotropic functions in the plane]
{Regularity of the derivatives of $p$-orthotropic functions in the plane for $1<p<2$}

\author{Diego Ricciotti}
\address{Diego Ricciotti, University of Pittsburgh, Department of Mathematics, 
301 Thackeray Hall, Pittsburgh, PA 15260, USA}
\email{DIR17@pitt.edu}

\begin{abstract}
We present a proof of the $C^1$ regularity of $p$-orthotropic functions in the plane for $1<p<2$, based on the monotonicity of the derivatives. 
Moreover we achieve an explicit logarithmic modulus of continuity. 
\end{abstract}
\maketitle
\tableofcontents

\let\thefootnote\relax\footnote{2010 Mathematics Subject Classification:  $35$J$70$, $35$B$65$.}
\let\thefootnote\relax\footnote{{\it Keywords}: Singular problems, Degenerate elliptic equations, Regularity.}
\section{Introduction}


In this work we investigate the regularity of $p$-orthotropic functions in the plane for $1<p<2$.
Let $\Omega\subset\mathbb{R}^2$ be an open set. A weak solution of the orthotropic $p$-Laplace equation (also known as pseudo $p$-Laplace equation) is a function $u\in W^{1,p}(\Omega)$ such that
\begin{equation}\label{orthodeg}
\sum_{i=1}^2 \int_\Omega |\partial_i u|^{p-2} \partial_i u \, \partial_i \phi \dx=0 \quad \text{for all} \quad \phi\in W_0^{1,p}(\Omega).
\end{equation}
Equation \eqref{orthodeg} arises as the Euler-Lagrange equation for the functional
\begin{equation}
I_{\Omega}(v) = \sum_{i=1}^2\int_{\Omega} \frac{|\partial_i v|^p}{p}\dx.
\end{equation}
The equation is singular when either one of the derivatives vanishes, and does not fall into the category of equations with $p$-Laplacian structure. 
It was proved by Bousquet and Brasco in \cite{BB} that weak solutions of \eqref{orthodeg} for $1<p<\infty$ are $C^1(\Omega)$. A simple proof which gives a logarithmic modulus of continuity for the derivatives is contained in \cite{LR} for the case $p\geq 2$. The latter relies on a lemma on the oscillation of monotone functions due to Lebesgue \cite{L} and the fact that derivatives of solutions are monotone (in the sense of Lebesgue). The purpose of this work is to extend this result to the case $1<p<2$ employing methods developed in \cite{LR}. We obtain the following:
\begin{theorem}\label{C1}
Let $\Omega\subset\mathbb{R}^2$ and $u\in W^{1,p}(\Omega)$ be a solution of the  equation \eqref{orthodeg} for $1<p<2$. Fix a ball  $B_{R}\subset\subset \Omega$. Then,    
 for all $j\in\{1,2\}$ and $B_r\subset\subset B_{R/2}$, we have
\begin{equation}\label{oscEst2}
\osc{B_r} (\partial_ju)\leq C_p 
\left(\log\left(\frac{R}{r}\right)\right)^{-\frac{1}{2}}\left( \fint_{B_{R}}  |\nabla u|^p \dx\right)^\frac{1}{p},
\end{equation}
where $C_p$ is a constant depending only on $p$.
\end{theorem}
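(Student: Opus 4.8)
The plan is to exploit the conjugacy, special to the plane, between the orthotropic $p$-Laplace equation and the orthotropic $p'$-Laplace equation with $p'=p/(p-1)$, so as to reduce the range $1<p<2$ (where $p'>2$) to the range $p'\ge 2$ already settled in \cite{LR}. Fix a concentric ball with $B_R\subset\subset B_{R_0}\subset\subset\Omega$ and work on the simply connected set $B_{R_0}$. Writing $\sigma_i:=|\partial_i u|^{p-2}\partial_i u$, so that $|\sigma_i|^{p'}=|\partial_i u|^p$ and $(\sigma_1,\sigma_2)\in L^{p'}(B_{R_0};\mathbb{R}^2)$, equation \eqref{orthodeg} says exactly that this field is divergence free in the sense of distributions; hence there is a conjugate function $\psi\in W^{1,p'}(B_{R_0})$ with $\partial_1\psi=-\sigma_2$ and $\partial_2\psi=\sigma_1$. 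Using $(p-1)(p'-1)=1$ one obtains the a.e.\ identities $|\partial_2\psi|^{p'-2}\partial_2\psi=\partial_1 u$ and $|\partial_1\psi|^{p'-2}\partial_1\psi=-\partial_2 u$ (with both sides vanishing on the respective zero sets), and inserting these into the weak formulation and integrating by parts twice shows that $\psi$ is a weak solution of the orthotropic $p'$-Laplace equation in $B_{R_0}$; moreover $|\nabla\psi|^{p'}\le C_p|\nabla u|^p$ a.e., because $(p-1)p'=p$.

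Since $p'\ge 2$, the estimate of \cite{LR} applies to $\psi$; used with outer ball $B_R$ (note $B_R\subset\subset B_{R_0}$) and any $B_r\subset\subset B_{R/2}$ it gives
\[
\osc{B_r}(\partial_j\psi)\ \le\ C_{p'}\Big(\log\tfrac{R}{r}\Big)^{-\frac12}\Big(\fint_{B_R}|\nabla\psi|^{p'}\Big)^{\frac1{p'}}\ \le\ C_p\Big(\log\tfrac{R}{r}\Big)^{-\frac12}\Big(\fint_{B_R}|\nabla u|^{p}\Big)^{\frac1{p'}}.
\]
In particular $\partial_j\psi$ is continuous, and bounding $\inf_{B_{R/2}}|\partial_j\psi|$ by the mean of $|\partial_j\psi|$ over $B_{R/2}$ and adding the above oscillation bound (with inner radius tending to $R/2$) gives $\|\partial_j\psi\|_{L^\infty(B_{R/2})}\le C_p(\fint_{B_R}|\nabla u|^{p})^{1/p'}$ --- equivalently, the interior Lipschitz estimate for $u$, also contained in \cite{BB}. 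Finally, since $t\mapsto|t|^{p'-2}t$ is locally Lipschitz with constant $(p'-1)M^{p'-2}$ on $[-M,M]$, the relation $\partial_1 u=|\partial_2\psi|^{p'-2}\partial_2\psi$ (and its symmetric counterpart for $\partial_2 u$) yields
\[
\osc{B_r}(\partial_1 u)\ \le\ (p'-1)\,\|\partial_2\psi\|_{L^\infty(B_{R/2})}^{p'-2}\ \osc{B_r}(\partial_2\psi)\ \le\ C_p\Big(\log\tfrac{R}{r}\Big)^{-\frac12}\Big(\fint_{B_R}|\nabla u|^{p}\Big)^{\frac{p'-2}{p'}+\frac1{p'}},
\]
and the exponent collapses to $1-\tfrac1{p'}=\tfrac1p$, which is \eqref{oscEst2}; the case $j=2$ is identical. (This also recovers the monotonicity of $\partial_j u$, since it is an increasing function of $\partial_{3-j}\psi$, which is monotone in the sense of Lebesgue by \cite{LR}.)

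The step I expect to be the main obstacle is the construction of $\psi$ and, above all, the verification that $|\partial_i\psi|^{p'-2}\partial_i\psi=\pm\partial_{3-i}u$ holds almost everywhere across the a priori large zero sets of $\partial_1 u$ and $\partial_2 u$, so that $\psi$ genuinely solves the $p'$-equation in the weak sense of \eqref{orthodeg}; the bookkeeping of exponents in the last display must also be handled carefully, although it works out exactly, the power $\tfrac{p'-2}{p'}=\tfrac{2-p}{p}$ lost in the $L^\infty$ bound for $\partial_j\psi$ being precisely regained on passing from the $p'$-mean of $|\nabla\psi|$ to the $p$-mean of $|\nabla u|$. A more literal use of the ``methods of \cite{LR}'' would instead apply the underlying scheme --- monotonicity of the derivatives, together with Lebesgue's oscillation lemma --- directly to $u$: after a regularization one still gets the monotonicity of $\partial_j u$ from the linear equation it satisfies, and one feeds a suitable nonlinear function of $\partial_j u$ into the lemma; the genuinely new difficulty there, absent when $p\ge 2$, is that the coefficients $|\partial_i u|^{p-2}$ are now singular, so the Caccioppoli estimate has to control cross terms weighted by $|\partial_i u|^{p-2}$ with $i\ne j$ on the set where $\partial_i u$ is small while $\partial_j u$ is not --- which the duality route sidesteps.
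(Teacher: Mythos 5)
Your argument is essentially correct, but it is a genuinely different route from the paper's. The paper never leaves the singular range: it regularizes the problem with the parameter $\epsilon$, proves uniform estimates for $u^\epsilon$ (a Moser-type iteration giving the Lipschitz bound \eqref{lip2} and the uniform $L^2$ bound \eqref{grad2} on $\nabla\partial_j u^\epsilon$), shows via the linearized equation \eqref{orthoder2} that $\partial_j u^\epsilon$ is monotone in the sense of Lebesgue, applies Lebesgue's oscillation lemma directly to $\partial_j u^\epsilon$, and passes to the limit using Proposition \ref{uni2}. You instead use the planar duality: the divergence-free field $(|\partial_1u|^{p-2}\partial_1u,|\partial_2u|^{p-2}\partial_2u)$ has a stream function $\psi\in W^{1,p'}$ on a slightly larger simply connected ball, the identity $(p-1)p'=p$ makes $\psi$ a weak solution of the orthotropic $p'$-equation with $p'\ge 2$ and $|\nabla\psi|^{p'}\le C_p|\nabla u|^p$, and the known estimate of \cite{LR} for $\partial_j\psi$ is pulled back through the locally Lipschitz map $t\mapsto|t|^{p'-2}t$, the exponent bookkeeping $\tfrac{p'-2}{p'}+\tfrac1{p'}=\tfrac1p$ checking out exactly; your in-passing derivation of the $L^\infty$ bound for $\nabla\psi$ from the oscillation estimate plus $\inf_{B_{R/2}}|\partial_j\psi|\le(\fint_{B_{R/2}}|\partial_j\psi|^{p'})^{1/p'}$ is also sound (alternatively one can cite \cite{BB} or \cite{FF} for it, as the paper does at the regularized level). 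What remains to be written out carefully in your route is only routine: existence of the stream function for an $L^{p'}$ divergence-free field on a ball, the density argument extending the weak formulation for $\psi$ from smooth to $W_0^{1,p'}$ test functions, and checking that the statement in \cite{LR} has exactly the form you invoke. The trade-off: your reduction is shorter and sidesteps the singular coefficients $|\partial_i u|^{p-2}$ entirely, but it consumes the full $p\ge 2$ theorem of \cite{LR} as a black box and is tied to the two-dimensional, simply connected setting; the paper's proof is self-contained at the level of the regularized problems, and produces along the way the uniform Lipschitz and $W^{1,2}$ estimates and the monotonicity of $\partial_j u^\epsilon$ in the range $1<p<2$, which your argument only recovers indirectly (monotonicity of $\partial_j u$ as an increasing function of $\partial_{3-j}\psi$).
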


\paragraph{\bf Notation.}
We indicate balls by $B_r=B_r(a)=\{x\in\mathbb{R}^2\;:\; |x-a|<r\}$ and we omit the center when not relevant. Whenever two balls $B_r\subset B_R$ appear in a statement they are implicitly assumed to be concentric. The variable $x$ denotes the vector $(x_1, x_2)$ and we denote the partial derivatives of a function $f$ with respect to $x_j$ as $\partial_j f$.

\section{Regularization}
We will consider a regularized problem by introducing a non degeneracy parameter $\epsilon>0$. 
\\

Fix $B_R\subset\subset \Omega\subset \mathbb{R}^2$ and  consider  the regularized Dirichlet problem
\begin{equation}\label{orthonondeg2}
\begin{split}
\begin{cases}
\sum_{i=1}^2 \int_{B_R} (|\partial_i u^\epsilon|^2 +\epsilon)^\frac{p-2}{2} \partial_i u^\epsilon\, \partial_i \phi \dx=0\\
u^\epsilon-u \in W_0^{1,p}(B_R).
\end{cases}
\end{split}
\end{equation}
Note that $u^\epsilon$ is the unique minimizer of the regularized functional
\begin{equation}
I^\epsilon_{B_R}(v)=\sum_{i=1}^2\int_{B_R}\frac{1}{p}(|\partial_i v|^2+\epsilon)^\frac{p}{2}\dx
\end{equation}
among $W^{1,p}(B_R)$ functions $v$ such that $v-u\in W^{l,p}_0(B_R)$.
By elliptic regularity theory, the unique solution $u^\epsilon$ of \eqref{orthonondeg2} is smooth in $B_R$.\\
Fix an index $j\in\{1,2\}$. Then, replacing $\phi$ by $\partial_j\phi$ in equation \eqref{orthonondeg2} and integrating by parts, we find that the derivative $\partial_j u^\epsilon$ satisfies the following equation
\begin{equation}\label{orthoder2}
\sum_{i=1}^2 \int_{B_R} (\epsilon+|\partial_i u^\epsilon|^2)^\frac{p-4}{2} (\epsilon+(p-1)|\partial_i u^\epsilon|^2)\,  \partial_i \partial_j u^\epsilon \,\partial_i\phi \dx =0
\end{equation}
for all $\phi\in C_0^\infty(B_R)$.\\

We now collect some uniform estimates and convergences (see also \cite{BB}).

\begin{Lemma}
Let $u\in W^{1,p}(\Omega)$ be a solution of \eqref{orthodeg} and $u^\epsilon$ be a solution of \eqref{orthonondeg2} for $1<p<2$. Then we have
\begin{equation}\label{energy2}
\int_{B_R} |\nabla u^\epsilon|^p\dx \leq 
C_p \left(  \int_{B_{R}} |\nabla u|^p\dx +\epsilon^\frac{p}{2}R^2 \right)
\end{equation}
where $C_p$ is a constant depending only on $p$.
\end{Lemma}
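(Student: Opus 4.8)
The plan is to use the minimality of $u^\epsilon$ against the competitor $v=u$, which is admissible because $u-u=0\in W^{1,p}_0(B_R)$ and because $u\in W^{1,p}(B_R)$ together with $|B_R|<\infty$ guarantees $I^\epsilon_{B_R}(u)<\infty$. Minimality gives
\begin{equation*}
\sum_{i=1}^2 \int_{B_R} (|\partial_i u^\epsilon|^2 +\epsilon)^{\frac p2}\dx \;\leq\; \sum_{i=1}^2 \int_{B_R} (|\partial_i u|^2 +\epsilon)^{\frac p2}\dx ,
\end{equation*}
and everything else is a matter of converting the two sides into the quantities in \eqref{energy2} by elementary inequalities for powers in $(0,1)$.

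On the left I would bound from below simply by discarding $\epsilon$, using monotonicity of $t\mapsto t^{p/2}$: $(|\partial_i u^\epsilon|^2+\epsilon)^{p/2}\geq |\partial_i u^\epsilon|^p$. On the right I would use that, since $0<p/2<1$, the map $t\mapsto t^{p/2}$ is subadditive, so $(a+b)^{p/2}\leq a^{p/2}+b^{p/2}$ for $a,b\geq 0$; applied with $a=|\partial_i u|^2$ and $b=\epsilon$ this yields $(|\partial_i u|^2+\epsilon)^{p/2}\leq |\partial_i u|^p+\epsilon^{p/2}$. Summing over $i\in\{1,2\}$ and using $|B_R|=\pi R^2$ gives
\begin{equation*}
\sum_{i=1}^2 \int_{B_R} |\partial_i u^\epsilon|^p\dx \;\leq\; \sum_{i=1}^2 \int_{B_R} |\partial_i u|^p\dx + 2\pi\,\epsilon^{\frac p2}R^2 .
\end{equation*}

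It then remains to pass between the anisotropic sum $\sum_i|\partial_i w|^p$ and the Euclidean $|\nabla w|^p$, again exploiting $0<p/2<1$: on the one hand $|\nabla u^\epsilon|^p=(|\partial_1 u^\epsilon|^2+|\partial_2 u^\epsilon|^2)^{p/2}\leq \sum_i|\partial_i u^\epsilon|^p$ by subadditivity, and on the other $|\partial_i u|^p=(|\partial_i u|^2)^{p/2}\leq |\nabla u|^p$ for each $i$, so $\sum_i|\partial_i u|^p\leq 2|\nabla u|^p$. Chaining the three displays produces \eqref{energy2}, with $C_p$ taken as an absolute constant (one may take $C_p=2\pi$). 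There is no genuine obstacle in this argument; the only point requiring care is keeping track of the direction of each elementary inequality — subadditivity $(a+b)^\theta\leq a^\theta+b^\theta$ for $\theta\in(0,1]$ for the upper bounds, and plain monotonicity of $t\mapsto t^{p/2}$ for the lower bound — and noting that $u$ is indeed a legitimate competitor with finite regularized energy.
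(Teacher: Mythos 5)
Your proof is correct and follows the paper's own route: the paper proves this lemma in one line by comparing energies, $I^\epsilon_{B_R}(u^\epsilon)\leq I^\epsilon_{B_R}(u)$, which is exactly your starting point, and the subadditivity/monotonicity manipulations you spell out are precisely the elementary details the paper leaves implicit. Nothing is missing.
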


\begin{proof}
The estimate follows from $$I^\epsilon_{B_R}(u^\epsilon)\leq I^\epsilon_{B_R}(u).$$
\end{proof}

\begin{Prop}
Let $u\in W^{1,p}(\Omega)$ be a solution of \eqref{orthodeg} and $u^\epsilon$ be a solution of \eqref{orthonondeg2} for $1<p<2$. Then, for all $j\in\{1,2\}$, we have
\begin{align}
\sup_{B_{R/2}} (\epsilon+|\nabla u^\epsilon|^2)&\leq C_p\left( \fint_{B_R}(\epsilon+|\nabla u^\epsilon|^2)^\frac{p}{2}\dx \right)^\frac{2}{p}, \label{lip2} \\ 
\int_{B_{R/2}}|\nabla \partial_j u^\epsilon|^2\dx &\leq C_p\left(\fint_{B_R}(|\nabla u|^p+\epsilon^\frac{p}{2})\dx\right)^\frac{2}{p} \label{grad2}
\end{align}
where $C_p$ is a constant depending only on $p$.
\end{Prop}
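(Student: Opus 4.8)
The plan is to differentiate the regularized equation once more and work with the resulting diagonal, divergence-form equation \eqref{orthoder2}. Fix $j\in\{1,2\}$, write $v:=\partial_j u^\epsilon$, $g:=\epsilon+|\nabla u^\epsilon|^2$, and
\[
c_i:=(\epsilon+|\partial_i u^\epsilon|^2)^{\frac{p-4}{2}}\bigl(\epsilon+(p-1)|\partial_i u^\epsilon|^2\bigr),
\]
so that \eqref{orthoder2} reads $\sum_i\int_{B_R}c_i\,\partial_i\partial_j u^\epsilon\,\partial_i\phi\dx=0$. Since $1<p<2$ one has the two-sided bound $(p-1)(\epsilon+|\partial_i u^\epsilon|^2)^{\frac{p-2}{2}}\le c_i\le(\epsilon+|\partial_i u^\epsilon|^2)^{\frac{p-2}{2}}$; using $\epsilon+|\partial_i u^\epsilon|^2\le g$ this gives the lower bound $c_i\ge(p-1)\,g^{\frac{p-2}{2}}$, while the upper bound only degenerates (becomes large) on the set where the partial derivative $\partial_i u^\epsilon$ is small --- this is the feature of the range $1<p<2$ that prevents the argument of \cite{LR} from applying verbatim. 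Smoothness of $u^\epsilon$ from elliptic regularity makes all of the manipulations below legitimate.

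For \eqref{lip2} the plan is to exhibit $g$ as a subsolution and then iterate. Testing the equation for $\partial_j u^\epsilon$ with $\partial_j u^\epsilon\,\eta$ ($\eta\ge0$), summing over $j\in\{1,2\}$ and discarding the nonnegative term $\sum_{i,j}\int c_i(\partial_i\partial_j u^\epsilon)^2\eta$, one obtains $\sum_i\int_{B_R}c_i\,\partial_i g\,\partial_i\eta\dx\le0$, i.e. $g$ is a weak subsolution of $\sum_i\partial_i(c_i\partial_i g)=0$. I would then run a Moser (or De~Giorgi) iteration on $g$ with test functions $\eta=g^{\beta}\zeta^2$ and cutoffs $\zeta$ shrinking toward $B_{R/2}$; the Caccioppoli inequality produced this way reads, schematically, $\sum_i\int c_i\,\bigl|\partial_i\bigl(g^{\frac{\beta+1}{2}}\zeta\bigr)\bigr|^2\dx\le C_\beta\sum_i\int c_i\,g^{\beta+1}|\partial_i\zeta|^2\dx$, after which one feeds the left-hand side into a Sobolev inequality (applied one variable at a time, as is natural for the diagonal operator) to gain integrability at each step. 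The hard part --- and the only place where the value of $p$ really enters --- is that the right-hand cutoff term is spoiled on the degenerate set $\{|\partial_i u^\epsilon|\ \text{small}\}$, where $c_i$ can be as large as $\epsilon^{\frac{p-2}{2}}$; there the iteration has to be closed using the diagonal structure together with the planarity and the $\epsilon$-stable bound $c_i|\partial_i u^\epsilon|^2\le(\epsilon+|\partial_i u^\epsilon|^2)^{\frac p2}\le g^{\frac p2}$, splitting the domain according to the size of $\partial_i u^\epsilon$ and appealing to the $p\ge2$-type estimate of \cite{LR} on the complementary, uniformly elliptic, region. The outcome is $\sup_{B_{R/2}}g\le C_p\bigl(\fint_{B_R}g^{\frac p2}\dx\bigr)^{2/p}$, which, combined with the energy estimate \eqref{energy2} (to pass from $g$ back to $|\nabla u|^p+\epsilon^{p/2}$), is exactly \eqref{lip2}.

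For \eqref{grad2} the plan is a Caccioppoli estimate for $\partial_j u^\epsilon$ anchored by the Lipschitz bound just obtained. Applying the proof of \eqref{lip2} on a slightly larger pair of balls, set $M:=\sup_{B_{3R/4}}g$, which by \eqref{lip2} (with $B_{3R/4}$ in place of $B_{R/2}$) and \eqref{energy2} is controlled by $\bigl(\fint_{B_R}(|\nabla u|^p+\epsilon^{p/2})\dx\bigr)^{2/p}$ up to a constant depending on $p$. Testing \eqref{orthoder2} with $\phi=\partial_j u^\epsilon\,\zeta^2$, where $\zeta$ is a cutoff between $B_{R/2}$ and $B_{3R/4}$, and using Young's inequality gives $\sum_i\int c_i|\partial_i\partial_j u^\epsilon|^2\zeta^2\dx\le 4\sum_i\int c_i|\partial_j u^\epsilon|^2|\partial_i\zeta|^2\dx$. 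On the left I would use $c_i\ge(p-1)M^{\frac{p-2}{2}}$ (valid since $g\le M$ on $B_{3R/4}$) to bound the left-hand side from below by $(p-1)M^{\frac{p-2}{2}}\int_{B_{R/2}}|\nabla\partial_j u^\epsilon|^2\dx$; on the right the diagonal term $i=j$ is immediately $\le M^{p/2}|\partial_j\zeta|^2$, while the off-diagonal term $i\ne j$ is again the obstruction and is dealt with by the same splitting-and-$\epsilon$-stable-bound mechanism as in the proof of \eqref{lip2}, now with the extra a priori information $|\partial_j u^\epsilon|^2\le M$. Dividing through by $M^{\frac{p-2}{2}}$ and inserting the bound on $M$ yields \eqref{grad2}. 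Throughout, the single genuine difficulty is the control of the off-diagonal contributions coming from the region where one of the partial derivatives nearly vanishes; everything else is the standard Caccioppoli/Moser machinery.
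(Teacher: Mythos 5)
There is a genuine gap, and it sits exactly at the point you flag as "the hard part." In both halves of your argument the right-hand side of your Caccioppoli inequalities carries the coefficient $c_i=(\epsilon+|\partial_i u^\epsilon|^2)^{\frac{p-4}{2}}(\epsilon+(p-1)|\partial_i u^\epsilon|^2)$, which for $1<p<2$ is only bounded above by $(\epsilon+|\partial_i u^\epsilon|^2)^{\frac{p-2}{2}}$, a quantity that is \emph{larger} than $g^{\frac{p-2}{2}}$ and can be as large as $\epsilon^{\frac{p-2}{2}}$ where $\partial_i u^\epsilon$ is small. So testing the differentiated equation with $g^{\beta}\zeta^2$ (for \eqref{lip2}) or with $\partial_j u^\epsilon\,\zeta^2$ (for \eqref{grad2}) produces cutoff terms whose weight does not match the weight $g^{\frac{p-2}{2}}$ you have on the left, and the iteration does not close. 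Your proposed fix --- splitting the domain according to the size of $\partial_i u^\epsilon$ and "appealing to the $p\ge 2$-type estimate of \cite{LR} on the complementary, uniformly elliptic, region" --- is not an argument: the good and bad sets are level sets of the unknown solution, not subdomains on which the machinery of \cite{LR} (which is an estimate for solutions on balls, and for $p\ge 2$) can be invoked, and they are not compatible with the integrations by parts that produce the Caccioppoli inequality in the first place. The $\epsilon$-stable bound $c_i|\partial_i u^\epsilon|^2\le g^{\frac p2}$ only helps when the factor multiplying $c_i$ is $|\partial_i u^\epsilon|^2$ for the \emph{same} index $i$; in your right-hand terms it is $g^{\beta+1}$, respectively $|\partial_j u^\epsilon|^2$ with $j\ne i$, so it does not apply. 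In short, you have correctly isolated the obstruction but not overcome it.

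The paper's resolution is structurally different and is worth noting for contrast. Instead of testing the differentiated equation \eqref{orthoder2}, one tests the \emph{original} equation with $\phi=\partial_j(\partial_j u^\epsilon_R\,w^{\frac\alpha2}\xi^2)$ and integrates by parts only in the "good" terms. The derivative of the coefficient, $\partial_i a_i(\partial_i u^\epsilon_R)\ge c_p\,w^{\frac{p-2}{2}}$ (an inequality that holds precisely because $1<p<2$), then appears only on the left, while every term in which the cutoff gets differentiated carries the \emph{undifferentiated} flux $a_i(\partial_i u^\epsilon_R)$, which satisfies the $\epsilon$-uniform bound $|a_i(\partial_i u^\epsilon_R)|\le w^{\frac{p-1}{2}}$; the degenerate set never enters, and the only price is the harmless appearance of $|\nabla^2\xi|$ on the right (see \eqref{mixEst2}). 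Moser iteration of that inequality gives \eqref{lip2}, and \eqref{grad2} does not require a new Caccioppoli at all: it follows from the $\alpha=0$ case \eqref{gradientEst2} by using that $w^{\frac{p-2}{2}}$ is a negative power, so it is bounded below by $(\sup w)^{\frac{p-2}{2}}$, after which \eqref{lip2} and \eqref{energy2} finish the job --- no off-diagonal term of the kind you struggle with ever appears. If you want to keep your subsolution/Moser framework, you would need to replace the vague splitting step by an identity or inequality that removes the mismatched weight, which is exactly what the paper's choice of test function accomplishes.
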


\begin{proof}
The proof of the Lipschitz bound can be found in \cite{FF}  while \eqref{grad2} appears in \cite{BB}. We provide details for completeness.
Note that by a change of variables, the function $u^\epsilon_R(x)=u^\epsilon(x_0+Rx)$ satisfies the equation 
\begin{equation}\label{orthonondeg2scaled}
\sum_{i=1}^2\int_{B_1}(|\partial_iu^\epsilon_R|^2+R^2\epsilon)^\frac{p-2}{2}\partial_iu^\epsilon_R\partial_i\phi\dx=0 \quad \text{for all} \quad \phi\in W_0^{1,p}(B_1).
\end{equation}
Introduce the notation $w=\epsilon R^2+|\nabla u^\epsilon_R|^2$ and $a_i(z)=a_i(z_i)=(\epsilon R^2+|z_i|^2)^\frac{p-2}{2}z_i$ so that equation \eqref{orthonondeg2scaled} rewrites as $$\sum_{i=1}^2\int_{B_1}a_i(\partial_iu^\epsilon_R)\partial_i\phi\dx=0 \quad \text{for all}\quad \phi\in W_0^{1,p}(B_1).$$ 
For $j\in\{1,2\}$ and $\alpha\geq 0$ take $\phi=\partial_j(\partial_ju^\epsilon_R\,w^\frac{\alpha}{2}\xi^2)$ so that
$\partial_i\phi=\partial_j(\partial_i\partial_ju^\epsilon_R w^\frac{\alpha}{2} \xi^2+\frac{\alpha}{2}\partial_iw \, w^\frac{\alpha-2}{2}\,\partial_ju^\epsilon_R\, \xi^2) + 2\partial_j(\xi\partial_i\xi\, w^\frac{\alpha}{2}\,\partial_ju^\epsilon_R)$. Sum in $j$ to get
\begin{equation*}
\begin{split}
A+B:&= \sum_{i,j=1}^2\int_{B_1} a_i(\partial_iu^\epsilon_R)\partial_j(\partial_i\partial_ju^\epsilon_R w^\frac{\alpha}{2} \xi^2+\frac{\alpha}{2}\partial_iw \, w^\frac{\alpha-2}{2}\,\partial_ju^\epsilon_R\, \xi^2)\dx \\
&+2\sum_{i,j=1}^2\int_{B_1}  a_i(\partial_iu^\epsilon_R) \partial_j(\xi\partial_i\xi\, w^\frac{\alpha}{2}\,\partial_ju^\epsilon_R)\dx=0.
\end{split}
\end{equation*}
Note that $\partial_i w=2\sum_{j=1}^2\partial_i\partial_ju^\epsilon_R\,\partial_ju^\epsilon_R$ and $\partial_ia_i(\partial_iu^\epsilon_R)\geq c_pw^\frac{p-2}{2}$ since $1<p<2$. Integrate by parts in $A$. We get $A=A_1+A_2$ where
\begin{equation*}
\begin{split}
A_1&:=\sum_{i,j=1}^2\int_{B_1}\partial_ia_i(\partial_iu^\epsilon_R)(\partial_i\partial_ju^\epsilon_R)^2\,w^\frac{\alpha}{2}\,\xi^2\dx
\geq c_p\sum_{j=1}^2 \int_{B_1} w^\frac{p-2+\alpha}{2}|\nabla\partial_ju^\epsilon_R|^2\xi^2\dx,\\
A_2&:=c\alpha\sum_{i,j=1}^2\int_{B_1}\partial_ia_i(\partial_iu^\epsilon_R)\partial_i\partial_ju^\epsilon_R\,\partial_ju^\epsilon _R\,\partial_iw\, w^\frac{\alpha-2}{2}\,\xi^2\dx
=c\alpha\sum_{i=1}^2 \int_{B_1}\partial_ia_i(\partial_iu^\epsilon_R) (\partial_i w)^2 w^\frac{\alpha-2}{2}\,\xi^2\dx\\
&\geq c_p\alpha \int_{B_1}w^\frac{p-4+\alpha}{2}|\nabla w|^2\xi^2\dx.
\end{split}
\end{equation*}
Now we estimate $B=B_1+B_2+B_3$. 
\begin{equation*}
\begin{split}
B_1:&=\sum_{i,j=1}^2\int_{B_1}a_i(\partial_iu^\epsilon_R)w^\frac{\alpha}{2}|\partial_ju^\epsilon_R|\,|\partial_j(\xi\partial_i\xi)|\dx
\leq C_p\int_{B_1}w^\frac{p+\alpha}{2} (|\nabla\xi|^2+|\nabla^2\xi|)\dx,\\
B_2:&=\frac{\alpha}{2}\sum_{i,j=1}^2\int_{B_1} a_i(\partial_iu^\epsilon_R)w^\frac{\alpha-2}{2}|\partial_jw|\,|\partial_ju^\epsilon_R| \,\xi\,|\partial_i\xi|\dx 
\leq C\alpha\int_{B_1} w^\frac{p+\alpha-2}{2}|\nabla w|\,\xi\,|\nabla\xi|\dx \\
&\leq \eta  \alpha \int_{B_1}w^\frac{p-4+\alpha}{2}|\nabla w|^2\xi^2\dx+\frac{C \alpha}{\eta}\int_{B_1}|\nabla\xi|^2\,w^\frac{p+\alpha}{2}\dx,\\
B_3:&=\sum_{i,j=1}^2\int_{B_1}a_i(\partial_iu^\epsilon_R)w^\frac{\alpha}{2}|\partial_j\partial_ju^\epsilon_R|\,|\partial_ju^\epsilon_R| \,\xi\,|\partial_i\xi|\dx 
\leq \sum_{j=1}^2\int_{B_1}w^\frac{p-1+\alpha}{2}|\nabla\partial_ju^\epsilon_R|\,\xi\,|\nabla\xi|\dx\\
&\leq \eta\sum_{j=1}^2 \int_{B_1} w^\frac{p-2+\alpha}{2}|\nabla\partial_ju^\epsilon_R|^2\xi^2\dx+\frac{C}{\eta}\int_{B_1}|\nabla\xi|^2\, w^\frac{p+\alpha}{2}\dx
\end{split}
\end{equation*}
where we used $a_i(\partial_iu^\epsilon_R)\leq w^\frac{p-1}{2}$ and Young's inequality with a parameter $\eta$ to be chosen suitably small. We get
\begin{equation}\label{mixEst2}
c_p\sum_{j=1}^2 \int_{B_1} w^\frac{p-2+\alpha}{2}|\nabla\partial_ju^\epsilon_R|^2\xi^2\dx
+ c_p\alpha \int_{B_1}w^\frac{p-4+\alpha}{2}|\nabla w|^2\xi^2\dx
\leq C_p(\alpha+1)\int_{B_1} (|\nabla\xi|^2+|\nabla^2\xi|)\, w^\frac{p+\alpha}{2}\dx.
\end{equation}
Note that for $\alpha=0$ we get for all $j\in\{1,2\}$
\begin{equation}\label{gradientEst2}
\int_{B_1} w^\frac{p-2}{2}|\nabla\partial_ju^\epsilon_R|^2\xi^2\dx
\leq C_p\int_{B_1} (|\nabla\xi|^2+|\nabla^2\xi|)\, w^\frac{p}{2}\dx,
\end{equation}
and since $|\nabla w|^2\leq c\sum_j |\nabla \partial_j u^\epsilon_R|^2|\nabla u^\epsilon_R|^2$ we have
\begin{equation}\label{alpha0}
\begin{split}
\int_{B_1}w^\frac{p-4}{2}|\nabla w|^2\xi^2\dx
&\leq c \sum_{j=1}^2\int_{B_1}w^\frac{p-4}{2}|\nabla u^\epsilon_R|^2|\nabla \partial_j u^\epsilon|^2\xi^2\dx
\leq c \sum_{j=1}^2\int_{B_1}w^\frac{p-2}{2}|\nabla \partial_j u^\epsilon_R|^2\xi^2\\
&\leq C_p\int_{B_1} (|\nabla\xi|^2+|\nabla^2\xi|)\, w^\frac{p}{2}\dx.
\end{split}
\end{equation}
Now for $\alpha\geq 1$, \eqref{mixEst2} implies
\begin{equation}
 \int_{B_1}w^\frac{p-4+\alpha}{2}|\nabla w|^2\xi^2\dx
\leq C_p \frac{\alpha+1}{\alpha}\int_{B_1} (|\nabla\xi|^2+|\nabla^2\xi|)\, w^\frac{p+\alpha}{2}\dx
\end{equation}
and combining with \eqref{alpha0} we get
\begin{equation*}
 \int_{B_1}\lvert\nabla (w^\frac{p+\alpha}{4}\xi)\rvert^2\dx
\leq C (p+\alpha)^2\int_{B_1} (|\nabla\xi|^2+|\nabla^2\xi|)\, w^\frac{p+\alpha}{2}\dx
\end{equation*}
for all $\alpha\geq 0$.
Using Sobolev's embedding $W_0^{1,2}(B_1)\hookrightarrow L^{2q}(B_1)$ for a fixed $q>1$ we get
\begin{equation}\label{moser2}
 \left(\int_{B_1} w^{q\frac{p+\alpha}{2}}\xi^{2q}\dx\right)^\frac{1}{q}
\leq C_p (p+\alpha)^2\int_{B_1} (|\nabla\xi|^2+|\nabla^2\xi|)\, w^\frac{p+\alpha}{2}\dx.
\end{equation} 
Now choose a sequence of radii $r_i=1/2^i+(1-1/2^{i})\frac{1}{2}$, cut-off functions $\xi$ between $r_i$ and $r_{i+1}$ and $\alpha_i=q^ip-p$ so that $\frac{p+\alpha_i}{2}=\frac{p}{2}q^i$. Using these in \eqref{moser2}, raising to the power $1/q^{i}$ and iterating  we get for all $i\in\mathbb{N}$
\begin{equation*}
\left(\int_{B_{r_{i+1}}} w^{\frac{p}{2}q^{i+1}}\dx \right)^\frac{1}{q^{i+1}}
\leq (C_p q^{2i} 2^i)^\frac{1}{q^i}\left( \int_{B_{r_i}} w^{\frac{p}{2}q^i}\dx\right)^\frac{1}{q^i}
\leq \prod_{j=0}^i(C_p q^{2j} 2^j)^\frac{1}{q^j} \int_{B_{1}} w^{\frac{p}{2}}\dx.
\end{equation*} 
Observe that $\prod_{i=0}^\infty(C_p q^{2i} 2^i)^\frac{1}{q^i}=C(p,q)<\infty$ so passing to the limit as $i\to\infty$ we get
$$\sup_{B_{1/2}}w^\frac{p}{2}\leq C(p,q)\int_{B_{1}} w^{\frac{p}{2}}\dx$$
which, after rescaling, proves \eqref{lip2}. Now going back to \eqref{gradientEst2}, choosing a cut-off function between $B_{R/2}$ and $B_R$ and using $1<p<2$ we get
\begin{equation*}
\int_{B_{R/2}} |\nabla \partial_j u^\epsilon|^2\dx \leq 
C_p\sup_{B_{R/2}} (\epsilon+|\nabla u^\epsilon|^2)^\frac{2-p}{p} \fint_{B_R}(\epsilon+|\nabla u^\epsilon|^2)^\frac{p}{2}\dx.
\end{equation*}
Using \eqref{lip2} and \eqref{energy2} we obtain \eqref{grad2}.
\end{proof}

Next we collect some facts about the convergence of $u^\epsilon$ to the solution of the degenerate equation. These are sufficient for our purposes.
\begin{Prop}\label{uni2}
Let $u^\epsilon$ be the solution of \eqref{orthonondeg2} for $1<p<2$ and $u\in W^{1,p}(\Omega)$ the solution of \eqref{orthodeg}. We have
\begin{itemize}
\item $u^\epsilon$ converges to $u$ locally uniformly in $B_R$,\\
\item $\nabla u^\epsilon$ converges to $\nabla u$ in $L^p(B_R)$.
\end{itemize}
\end{Prop}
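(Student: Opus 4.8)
The plan is to obtain the strong $L^p$ convergence of the gradients directly from the two equations via a monotonicity argument, and then to deduce the locally uniform convergence from the uniform Lipschitz bound \eqref{lip2}. As a preliminary step I would record that, by the energy estimate \eqref{energy2} together with Poincaré's inequality (recall $u^\epsilon-u\in W_0^{1,p}(B_R)$), the family $\{u^\epsilon\}_{0<\epsilon\le 1}$ is bounded in $W^{1,p}(B_R)$; moreover, by the interior Lipschitz bound \eqref{lip2} — which, being a local estimate, holds with $B_R$ replaced by any ball compactly contained in $B_R$ — the family is equi-Lipschitz and uniformly bounded on every $B_\rho\subset\subset B_R$.

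Write $a_i(t)=|t|^{p-2}t$ and $a_i^\epsilon(t)=(t^2+\epsilon)^{\frac{p-2}{2}}t$. I would test equation \eqref{orthonondeg2} with $\phi=u^\epsilon-u$ and equation \eqref{orthodeg} with the same function (extended by $0$ outside $B_R$, which is an admissible test function), subtract, and after adding and subtracting $a_i(\partial_iu^\epsilon)$ arrive at
\begin{equation*}
\sum_{i=1}^2\int_{B_R}\big(a_i(\partial_iu^\epsilon)-a_i(\partial_iu)\big)(\partial_iu^\epsilon-\partial_iu)\dx=\sum_{i=1}^2\int_{B_R}\big(a_i(\partial_iu^\epsilon)-a_i^\epsilon(\partial_iu^\epsilon)\big)(\partial_iu^\epsilon-\partial_iu)\dx.
\end{equation*}
I expect the control of the right-hand side to be the main obstacle, precisely because the function $\partial_iu^\epsilon$ whose limit is being taken itself depends on $\epsilon$, so a plain dominated-convergence argument is not available. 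I would handle it by splitting $B_R$ according to whether $|\partial_iu^\epsilon|\le\epsilon^{1/4}$ or not: on the first set one has $|a_i(\partial_iu^\epsilon)-a_i^\epsilon(\partial_iu^\epsilon)|\le C_p\,\epsilon^{\frac{p-1}{4}}$, while on the second, using $|a_i(t)-a_i^\epsilon(t)|=|t|^{p-1}\big(1-(1+\epsilon/t^2)^{-\frac{2-p}{2}}\big)$ and $t^2>\epsilon^{1/2}$, one has $|a_i(\partial_iu^\epsilon)-a_i^\epsilon(\partial_iu^\epsilon)|\le\delta(\epsilon)\,|\partial_iu^\epsilon|^{p-1}$ with $\delta(\epsilon)\to 0$. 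Combining these with Hölder's inequality and the uniform bounds $\|\nabla u^\epsilon\|_{L^p(B_R)}\le C$ from \eqref{energy2}, both contributions tend to $0$, so the right-hand side vanishes as $\epsilon\to 0$.

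For the left-hand side I would invoke the standard monotonicity inequality for $1<p<2$, namely $(a_i(s)-a_i(t))(s-t)\ge c_p(|s|+|t|)^{p-2}|s-t|^2$, and then apply Hölder's inequality with exponents $2/p$ and $2/(2-p)$ to bound, for each $i$,
\begin{equation*}
\int_{B_R}|\partial_iu^\epsilon-\partial_iu|^p\dx\le C_p\left(\int_{B_R}\big(a_i(\partial_iu^\epsilon)-a_i(\partial_iu)\big)(\partial_iu^\epsilon-\partial_iu)\dx\right)^{\frac p2}\left(\int_{B_R}(|\partial_iu^\epsilon|+|\partial_iu|)^p\dx\right)^{\frac{2-p}{2}}.
\end{equation*}
Since the last factor is uniformly bounded and, by the previous step, the first tends to $0$, letting $\epsilon\to 0$ gives $\nabla u^\epsilon\to\nabla u$ in $L^p(B_R)$, which is the second assertion.

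Finally, from $\nabla u^\epsilon\to\nabla u$ in $L^p(B_R)$ and Poincaré's inequality I get $u^\epsilon\to u$ in $L^p(B_R)$. By the equi-Lipschitz and uniform boundedness on compact subsets, Arzelà–Ascoli combined with a diagonal argument over an exhaustion of $B_R$ shows that every sequence $\epsilon_k\to 0$ has a subsequence along which $u^{\epsilon_k}$ converges locally uniformly to a limit that must coincide with $u$ by the $L^p$ convergence; since this limit is independent of the chosen subsequence, the whole family converges to $u$ locally uniformly in $B_R$, proving the first assertion. (An alternative route to the gradient convergence is to extract a weak $W^{1,p}(B_R)$ limit of a subsequence, identify it with $u$ using the minimality $I^\epsilon_{B_R}(u^\epsilon)\le I^\epsilon_{B_R}(u)$, weak lower semicontinuity of $I_{B_R}$ and strict convexity, and then upgrade weak to strong convergence via $\int_{B_R}|\partial_iu^{\epsilon_k}|^p\to\int_{B_R}|\partial_iu|^p$ for each $i$ and the uniform convexity of $L^p$.)
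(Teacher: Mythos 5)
Your proof is correct, but it is organized differently from the paper's, both in how the error term is produced and in the order of the two conclusions. The paper first extracts a weak $W^{1,p}$ limit, identifies it with $u$ through the minimality $I^\epsilon_{B_R}(u^\epsilon)\le I^\epsilon_{B_R}(u)$, lower semicontinuity and uniqueness of the minimizer, gets uniform convergence from \eqref{lip2} and Ascoli--Arzel\`a, and only then proves gradient convergence: it tests \eqref{orthonondeg2} with $u^\epsilon-u$ and adds and subtracts the \emph{regularized} operator evaluated at $\partial_i u$, so that the right-hand side is $\sum_i\int (|\partial_i u|^2+\epsilon)^{\frac{p-2}{2}}\partial_i u\,(\partial_i u-\partial_i u^\epsilon)\dx$, which vanishes by pairing the weak $L^p$ convergence of $\partial_i u^\epsilon-\partial_i u$ (already established) against a strongly convergent $L^{p'}$ factor. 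You instead also test the degenerate equation \eqref{orthodeg} with $u^\epsilon-u$ (admissible after extension by zero), so your error term is the discrepancy $a_i(\partial_i u^\epsilon)-a_i^\epsilon(\partial_i u^\epsilon)$ between the unregularized and regularized operators at the \emph{same} argument, which you control quantitatively by splitting at $|\partial_i u^\epsilon|\le\epsilon^{1/4}$; this needs only the uniform $L^p$ bounds from \eqref{energy2}, not weak convergence, so you get strong gradient convergence first and then recover local uniform convergence via Poincar\'e, the interior Lipschitz bound and a diagonal Ascoli--Arzel\`a argument, bypassing the minimality and uniqueness-of-minimizer step altogether. The left-hand sides are handled the same way in both arguments (termwise nonnegativity from monotonicity, then H\"older with exponents $2/p$ and $2/(2-p)$); your weight $(|\partial_i u^\epsilon|+|\partial_i u|)^{p-2}$ requires the usual convention on the set where both derivatives vanish, which the paper's $\epsilon$-regularized weight avoids, but this is harmless. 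The trade-off: the paper's route is softer (weak compactness does the work on the error term) but needs the variational identification of the limit; yours is more quantitative and self-contained, at the cost of the explicit splitting estimate for $a_i-a_i^\epsilon$.
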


\begin{proof}
From the energy estimate \eqref{energy2} we obtain a uniform bound for the $L^p$ norm of $\nabla u^\epsilon$. Therefore (up to a subsequence) $u^\epsilon$ converges to some $v\in W^{1,p}(B_R)$ weakly in $W^{1,p}(B_R)$ and strongly in $L^p(B_R)$. Note that we have $v-u\in W_0^{1,p}(B_R)$. By weakly lower semicontinuity we get
\begin{equation*}
\begin{split}
I_{B_R}(v) = \sum_{i=1}^2\int_{B_R}\frac{|\partial_i v|^p}{p}\dx
&\leq \liminf_{\epsilon\to 0} \sum_{i=1}^2\int_{B_R}\frac{|\partial_i u^\epsilon|^p}{p}\dx\\
&\leq \liminf_{\epsilon\to 0} \sum_{i=1}^2\int_{B_R}\frac{1}{p}(|\partial_i u^\epsilon|^2+\epsilon)^\frac{p}{2}\dx\\
&\leq \liminf_{\epsilon\to 0} \sum_{i=1}^2\int_{B_R}\frac{1}{p}(|\partial_i u|^2+\epsilon)^\frac{p}{2}\dx\\
&=\sum_{i=1}^2\int_{B_R}\frac{1}{p}|\partial_i u|^p\dx
=I_{B_R}(u).
\end{split}
\end{equation*}
Note that in the third inequality we used the minimality of $u^\epsilon$ subject to the boundary condition $u^\epsilon-u\in W_0^{1,p}(B_R)$. By uniqueness of the minimizer of $I_{B_R}$ among functions with boundary values $u$ in $B_R$, we get $v=u$. 
By the uniform Lipschitz estimate \eqref{lip2} and Ascoli-Arzela' theorem we obtain that the convergence is uniform.\\

Now we show $L^p(B_R)$ convergence of the gradient. Use $\phi = u^\epsilon-u$ as a test function in \eqref{orthonondeg2}, add and subtract the term $(|\partial_i u|^2+\epsilon)^\frac{p-2}{2}\partial_i u$ to get
\begin{equation*}
\begin{split}
\sum_{i=1}^2\int_{B_R} &\left((|\partial_i u^\epsilon|^2+\epsilon)^\frac{p-2}{2}\partial_i u^\epsilon-(|\partial_i u|^2+\epsilon)^\frac{p-2}{2}\partial_i u\right)\left(\partial_i u^\epsilon-\partial_i u\right)\dx\\
&=\sum_{i=1}^2\int_{B_R} (|\partial_i u|^2+\epsilon)^\frac{p-2}{2}\partial_i u (\partial_i u-\partial_i u^\epsilon)\dx.
\end{split}
\end{equation*}
Since  $\partial_i u-\partial_i u^\epsilon$  converges to $0$ weakly in $L^p(B_R)$, the integral in the right hand side converges to $0$. We can minorize the integral in the left hand side using the inequality 
$$ |a-b|^2(\epsilon+|a|^2+|b^2|)^\frac{p-2}{2}\leq C_p ((\epsilon+|a|^2)^\frac{p-2}{2}a-(\epsilon+|b|^2)^\frac{p-2}{2}b)(a-b) $$
valid for $1<p<2$, and obtain that
\begin{equation}\label{conv0}
\int_{B_R}\left(\epsilon+|\partial_i u^\epsilon|^2+|\partial_i u|^2\right)^\frac{p-2}{2}|\partial_i u^\epsilon-\partial_i u|^2\dx \longrightarrow 0
\end{equation} 
as $\epsilon\to 0$, for $i=1$, $2$.
Finally by H\"older's inequality
\begin{equation*}
\begin{split}
\int_{B_R} |\partial_iu^\epsilon-\partial_iu|^p\dx
&= \int_{B_R}  |\partial_iu^\epsilon-\partial_iu|^p \left(\epsilon+|\partial_i u^\epsilon|^2+|\partial_i u|^2\right)^\frac{p(p-2)}{2}\left(\epsilon+|\partial_i u^\epsilon|^2+|\partial_i u|^2\right)^\frac{p(2-p)}{2}\dx\\
&\leq \left( \int_{B_R} |\partial_iu^\epsilon-\partial_iu|^2 \left(\epsilon+|\partial_i u^\epsilon|^2+|\partial_i u|^2\right)^\frac{p-2}{2}\dx\right)^\frac{p}{2}\cdot\\
&\qquad\qquad\qquad\quad\cdot\left( \int_{B_R} \left(\epsilon+|\partial_i u^\epsilon|^2+|\partial_i u|^2\right)^\frac{p}{2}\dx\right)^\frac{2-p}{2}.
\end{split}
\end{equation*}
Since the last integral is uniformly bounded in $\epsilon$, using \eqref{conv0} we get  that $\partial_i u^\epsilon$ converges to $\partial_i u$ in $L^p(B_R)$.
\end{proof}

\section{Monotone functions and Lebesgue's lemma}
A continuous function $v:\Omega\longrightarrow\mathbb{R}$ is monotone (in the sense of Lebesgue) if
$$\max_{\overline{D}}v=\max_{\partial D}v\quad \text{and}\quad  \min_{\overline{D}}v=\min_{\partial D}v$$
for all subdomains $D\subset\subset \Omega$. Monotone functions are further discussed in \cite{Manf}.

The next Lemma is due to Lebesgue \cite{L}.
\begin{Lemma}\label{oscLeb}
Let $B_R\subset \mathbb{R}^2$ and  $v\in C(B_R )\cap W^{1,2}(B_R)$ be monotone in the sense of Lebesgue. Then
\begin{equation*}
(\osc{ B_r} v)^2\log\left(\frac{R}{r}\right)
\leq  \pi \int_{B_R\setminus B_r} |\nabla v(x)|^2 \dx
\end{equation*}
for every $r<R$.
\end{Lemma}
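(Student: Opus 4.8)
The plan is to pass to polar coordinates $(\rho,\theta)$ centered at the common center of $B_r$ and $B_R$, to bound $\osc{B_r}v$ by the oscillation of $v$ along each circle $\partial B_\rho$ with $r<\rho<R$, and then to average the resulting one‑dimensional estimates against $\,d\rho/\rho$, which is what produces the logarithm.

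First I would record the measure‑theoretic facts that underlie the argument. Since $v\in W^{1,2}(B_R)$, a Fubini‑type argument in polar coordinates shows that for almost every $\rho\in(r,R)$ the slice $\theta\mapsto v(\rho,\theta)$ is absolutely continuous and $2\pi$‑periodic, with $\int_0^{2\pi}|\partial_\theta v(\rho,\theta)|^2\,d\theta<\infty$ and $|\partial_\theta v|\le\rho\,|\nabla v|$ a.e.\ on $\partial B_\rho$; moreover, because $v$ is continuous, for a.e.\ such $\rho$ this slice agrees with the genuine restriction of $v$ to $\partial B_\rho$. Call these radii \emph{good}. I expect the careful justification of this step --- absolute continuity on almost every circle, together with the identification with the continuous representative --- to be the only genuinely technical point; everything afterwards is elementary.

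Next I would fix a good radius $\rho$. Since $v$ is continuous it attains its extrema on the compact set $\overline{B_\rho}\subset B_R$, and monotonicity in the sense of Lebesgue gives $\max_{\overline{B_\rho}}v=\max_{\partial B_\rho}v$ and $\min_{\overline{B_\rho}}v=\min_{\partial B_\rho}v$, hence $\osc{\overline{B_\rho}}v=\osc{\partial B_\rho}v$; since $B_r\subset\overline{B_\rho}$ this yields $\osc{B_r}v\le\osc{\partial B_\rho}v$. Choosing angles $\theta_1,\theta_2$ at which $v(\rho,\cdot)$ attains its maximum and minimum on the circle, the two complementary arcs joining $\theta_1$ to $\theta_2$ each contribute at least $\osc{\partial B_\rho}v$ to $\int_0^{2\pi}|\partial_\theta v(\rho,\theta)|\,d\theta$ by the fundamental theorem of calculus, so
\[
\osc{B_r}v\;\le\;\osc{\partial B_\rho}v\;\le\;\frac12\int_0^{2\pi}|\partial_\theta v(\rho,\theta)|\,d\theta\;\le\;\frac{\rho}{2}\int_0^{2\pi}|\nabla v(\rho,\theta)|\,d\theta .
\]
Squaring and applying the Cauchy--Schwarz inequality $\bigl(\int_0^{2\pi}g\,d\theta\bigr)^2\le 2\pi\int_0^{2\pi}g^2\,d\theta$ gives $\bigl(\osc{B_r}v\bigr)^2\le\tfrac{\pi}{2}\,\rho^2\int_0^{2\pi}|\nabla v(\rho,\theta)|^2\,d\theta$.

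Finally I would divide this inequality by $\rho$ and integrate over $\rho\in(r,R)$, noting that the set of bad radii is null. The left‑hand side integrates to $\bigl(\osc{B_r}v\bigr)^2\log(R/r)$, while the right‑hand side becomes
\[
\frac{\pi}{2}\int_r^R\!\!\int_0^{2\pi}|\nabla v(\rho,\theta)|^2\,\rho\,d\theta\,d\rho=\frac{\pi}{2}\int_{B_R\setminus B_r}|\nabla v(x)|^2\dx\;\le\;\pi\int_{B_R\setminus B_r}|\nabla v(x)|^2\dx ,
\]
which is the claimed estimate (in fact with the sharper constant $\tfrac{\pi}{2}$).
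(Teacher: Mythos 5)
Your proof is correct and follows essentially the same route as the paper's: bound the oscillation of $v$ on each circle $\partial B_\rho$ by the tangential derivative via the fundamental theorem of calculus and Cauchy--Schwarz, integrate against $d\rho/\rho$ to produce the logarithm, and use Lebesgue monotonicity to replace $\osc{\partial B_\rho} v$ by $\osc{B_r} v$. The only differences are technical: you treat the non-smooth case directly through absolute continuity on almost every circle, where the paper proves the estimate for smooth $v$ and appeals to approximation, and your two-arc argument yields the slightly sharper constant $\pi/2$ in place of $\pi$.
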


\begin{proof}
Assume $v$ is smooth. Let $(\eta, \zeta)$ be the center of $B_R$. Let $x_1$ and $x_2$ be two points on the circle of radius $t$, and let $\gamma:[0,2\pi]\longrightarrow \mathbb{R}^2$, $\gamma(s)=(\eta+t\cos(s),\zeta+ t\sin(s))$ be a parametrization of the circle such that $\gamma(a)=x_1$ and $\gamma(b)=x_2$. Then we have
\begin{equation*}
\begin{split}
v(x_1)-v(x_2)=\int_a^b \frac{d}{ds} v(\gamma(s)) \ds 
= \int_a^b \langle  \nabla v(\gamma(s)), \gamma '(s) \rangle \ds
\leq \int_a^{b} t\, |\nabla v(\gamma(s))| \ds.
\end{split}
\end{equation*}
Taking the supremum on angles $a$ and $b$ such that $|a-b|\leq \pi$ and using H\"older's inequality, we get
\begin{equation*}
(\osc{\partial B_t}v )^2 
\leq \pi t^2  \int_0^{2\pi}  |\nabla v (\gamma(s))|^2 \ds.
\end{equation*}
Now diving by $t$, integrating from $r$ to $R$, and using polar coordinates we get
\begin{equation*}
\int_r^R\frac{(\text{osc}_{\partial B_t}v )^2}{t} \dt
\leq \pi \int_r^R \int_0^{2\pi} t\,|\nabla v(\gamma(s))|^2 \ds \dt
= \pi \int_{B_R\setminus B_r} |\nabla v(x)|^2 \dx.
\end{equation*}
Thanks to the monotonicity of $v$, for $t\geq r$ we have
\begin{equation*}
\osc{\partial B_t} \partial_ju^\epsilon \geq
   \osc{B_t} \partial_ju^\epsilon
\geq \osc{B_r} \partial_ju^\epsilon
\end{equation*}
and we get the result for a smooth function. The general statement follows by approximation.
\end{proof}

The following is credited to \cite{BB} (see Lemma 2.14 for the minimum principle).
\begin{Lemma}\label{maxmin2}[Minimum and Maximum principles for the derivatives]\\
Let $u^\epsilon$ be the solution of \eqref{orthonondeg2}. Then 
$$\min_{\partial B_r}\partial_j u^\epsilon\leq \partial_j u^\epsilon (x)\leq\max_{\partial B_r}\partial_j u^\epsilon $$
for all $x\in B_r$, $B_r\subset\subset B_R$ and $j=1$, $2$. In particular, $\partial_j u^\epsilon$ is monotone in the sense of Lebesgue.

\end{Lemma}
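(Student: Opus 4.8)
The plan is to prove that each $\partial_j u^\epsilon$ satisfies a maximum and minimum principle, which forces monotonicity in the sense of Lebesgue. The key observation is that, as noted in the excerpt, $v := \partial_j u^\epsilon$ is a weak solution of the linear elliptic equation \eqref{orthoder2}, namely
\begin{equation*}
\sum_{i=1}^2 \int_{B_R} b_i(x)\, \partial_i v\, \partial_i \phi \dx = 0 \qquad \text{for all } \phi \in C_0^\infty(B_R),
\end{equation*}
where $b_i(x) = (\epsilon + |\partial_i u^\epsilon|^2)^{\frac{p-4}{2}}(\epsilon + (p-1)|\partial_i u^\epsilon|^2)$. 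Since $u^\epsilon$ is smooth in $B_R$ and $1 < p < 2$, each coefficient $b_i$ is smooth and, on any $B_r \subset\subset B_R$, satisfies $0 < \lambda \leq b_i(x) \leq \Lambda < \infty$ for constants depending on $\epsilon$, $p$, and $\sup_{B_r}|\nabla u^\epsilon|$ (here one uses that $\epsilon > 0$ gives a lower bound and the Lipschitz bound \eqref{lip2} gives an upper bound). Hence $v$ is a weak solution of a uniformly elliptic equation in divergence form with bounded measurable (indeed smooth) coefficients on $B_r$.

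First I would record this linear structure explicitly and note that $v = \partial_j u^\epsilon \in C^\infty(B_R) \cap W^{1,2}(B_r)$ for every $B_r \subset\subset B_R$ (smoothness of $u^\epsilon$, plus \eqref{grad2} or a local version for the $W^{1,2}$ bound). Then the weak maximum principle for uniformly elliptic equations in divergence form (see, e.g., Gilbarg--Trudinger) applies: since $v$ is a weak \emph{solution} (both a subsolution and a supersolution), for any subdomain $D \subset\subset B_R$ one has $\max_{\overline D} v = \max_{\partial D} v$ and $\min_{\overline D} v = \min_{\partial D} v$. Applying this with $D = B_r$ gives the stated two-sided bound $\min_{\partial B_r} \partial_j u^\epsilon \leq \partial_j u^\epsilon(x) \leq \max_{\partial B_r}\partial_j u^\epsilon$ for all $x \in B_r$, and applying it to arbitrary $D \subset\subset B_R$ gives precisely the definition of monotonicity in the sense of Lebesgue.

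The main point requiring care is the \emph{uniform} ellipticity of the coefficients $b_i$: the equation \eqref{orthoder2} is only uniformly elliptic because of the regularization parameter $\epsilon > 0$ (the lower bound $b_i \geq c_p (\epsilon + |\partial_i u^\epsilon|^2)^{(p-2)/2}$ degenerates only where $|\partial_i u^\epsilon| \to \infty$, which cannot happen on a compactly contained ball by the Lipschitz bound, while the $\epsilon$ keeps it from vanishing where $\partial_i u^\epsilon = 0$). Once this is in hand the maximum principle is completely standard. I would therefore structure the proof as: (i) rewrite \eqref{orthoder2} in the linear form above and verify uniform ellipticity and regularity of the coefficients on $B_r \subset\subset B_R$; (ii) invoke the weak maximum and minimum principles for such equations; (iii) conclude the pointwise bounds and monotonicity. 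Alternatively, and perhaps more in the spirit of \cite{BB}, one can avoid citing the general theory by testing \eqref{orthoder2} with $\phi = (v - \max_{\partial D} v)_+$ (suitably cut off and supported in $D$): the ellipticity gives $\int_D \sum_i b_i (\partial_i (v - k)_+)^2 \dx \leq 0$, hence $(v-k)_+ \equiv 0$ in $D$, i.e. $v \leq k$; the minimum principle follows by applying this to $-v$, which solves the same equation. The only genuine obstacle, as mentioned, is making sure the test function is admissible (a standard truncation-and-cutoff argument, using $v \in W^{1,2}_{loc}$) and that the quadratic form is coercive — both of which reduce to the uniform ellipticity already established.
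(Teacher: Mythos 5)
Your proposal is correct, and in fact your ``alternative'' at the end is exactly the paper's proof: the paper tests \eqref{orthoder2} with $\phi^{\pm}=1_{B_r}(\partial_j u^\epsilon-C)^{\pm}$, where $C$ bounds $\partial_j u^\epsilon$ on $\partial B_r$, notes that these are admissible simply because $u^\epsilon$ is smooth (so no truncation-and-cutoff machinery is needed), and then uses the pointwise lower bound $b_i\geq \epsilon\,(\epsilon+|\nabla u^\epsilon|^2)^{\frac{p-4}{2}}>0$ to conclude that $\int_{B_r}(\epsilon+|\nabla u^\epsilon|^2)^{\frac{p-4}{2}}|\nabla(\partial_j u^\epsilon-C)^{\pm}|^2\dx\leq 0$, hence $(\partial_j u^\epsilon-C)^{\pm}$ is constant and, vanishing on $\partial B_r$, is identically zero. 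Your primary route---rewriting \eqref{orthoder2} as a linear divergence-form equation with coefficients $b_i$ that are smooth and uniformly elliptic on each $B_r\subset\subset B_R$ (thanks to $\epsilon>0$ and the local boundedness of $\nabla u^\epsilon$), and citing the weak maximum principle---is also perfectly valid and has the advantage of immediately giving the principle on arbitrary subdomains $D\subset\subset B_R$, which is what the Lebesgue monotonicity claim literally requires; the paper's argument is stated for balls but applies verbatim to any such $D$. What the paper's direct computation buys is self-containedness: it never needs quantitative ellipticity constants $\lambda,\Lambda$, only the strict pointwise positivity of the weight, which is precisely the role of the regularization parameter $\epsilon$ that you correctly identified as the crux.
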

\begin{proof}
We are going to show that given a constant $C$, if $\partial_j u^\epsilon \leq C$ (resp. $\partial_j u^\epsilon \geq C$) in $\partial B_r$ then $\partial_j u^\epsilon \leq C$ (resp. $\partial_j u^\epsilon \geq C$) in $B_r$.
Let $\phi^{\pm}= 1_{B_r}(\partial_j u^\epsilon-C)^\pm =1_{B_r}\max\{\pm(\partial_j u^\epsilon-C),0\} $ in the equation satisfied by the derivative \eqref{orthoder2}. Since $u^\epsilon$ is smooth and $\partial_j u^\epsilon \geq C$  (resp. $\partial_j u^\epsilon \leq C$) on $\partial B_r$ we have $\phi^\pm\in W_0^{1,2}(\Omega)$, so they are admissible functions. We get
\begin{equation*}
\begin{split}
0&=\sum_{i=1}^2 \int_{B_r} (\epsilon+|\partial_i u^\epsilon|^2)^\frac{p-4}{2} (\epsilon+(p-1)|\partial_i u^\epsilon|^2)\, |\partial_i (\partial_j u^\epsilon -C)^\pm |^2 \dx \\
&\geq \epsilon \sum_{i=1}^2\int_{B_r}(\epsilon+|\nabla u^\epsilon|^2)^\frac{p-4}{2}  |\partial_i (\partial_j u^\epsilon -C)^\pm |^2 \dx \\
&= \epsilon \int_{B_r}(\epsilon+|\nabla u^\epsilon|^2)^\frac{p-4}{2}  |\nabla (\partial_j u^\epsilon -C)^\pm |^2 \dx .
\end{split}
\end{equation*}
This implies $(\partial_j u^\epsilon -C)^\pm$ is constant in $B_r$, and since it is $0$ in $\partial B_r$ then $(\partial_j u^\epsilon -C)^\pm=0$ in $B_r$.
\end{proof}
\

\section{Proof of the Main Theorem}
\begin{proof}
[Proof of Theorem \eqref{C1}]
Applying Lemma \eqref{oscLeb} and estimate \eqref{grad2} we get for all $r<R/2$
\begin{equation}
(\osc{B_r} \partial_ju^\epsilon)^2 \log(\frac{R}{r}) \leq C \norm{\nabla \partial_ju^\epsilon}^2_{L^2(B_{R/2})} \leq C \left(\fint_{B_{R}} |\nabla u|^p\dx+\epsilon^\frac{p}{2}\right)^\frac{2}{p}
\end{equation}
and hence for all $r<R/2$
\begin{equation}\label{osc2}
\osc{B_r} \partial_ju^\epsilon  \leq C \left(\log\left(\frac{R}{r}\right)\right)^{-\frac{1}{2}}
\left(\fint_{B_{R}} |\nabla u|^p\dx+\epsilon^\frac{p}{2}\right)^\frac{1}{p}
\end{equation}
where $C$ is a constant independent of $\epsilon$.

Thanks to Proposition \eqref{uni2} we can pass to the limit and get \eqref{oscEst2}.
\end{proof}

\subsection{Acknowledgements}
I thank Peter Lindqvist for useful comments and suggestions.

\bibliography{reference_bib}\nocite{*}
\bibliographystyle{abbrv}

\end{document}